\let\phi\varphi
\newcommand{\N}{\mathsf{N}}
\newcommand{\Dten}{\mathrm{D10}}
\newcommand{\Dth}{\mathrm{D13}}
\newcommand{\Nvalid}[1]{\vDash_{\mathrm{N}}{#1}}
\newcommand{\dom}[0]{\mathop{dom}}
\begin{document}

\title{A curious dialogical logic and \\ its composition problem}
\author{Jesse Alama\inst{1} \and Sara L.\ Uckelman\inst{2}\thanks{Both authors were funded by the FCT/NWO/DFG project ``Dialogical Foundations of Semantics'' (DiFoS) in the ESF EuroCoRes programme LogICCC (FCT LogICCC/0001/2007; LogICCC-FP004; DN 231-80-002; CN 2008/08314/GW).}}
\institute{Center for Artificial Intelligence \\ New University of Lisbon \\
\email{j.alama@fct.unl.pt}
\and Institute for Logic, Language, and Computation \\
Universiteit van Amsterdam \\
\email{S.L.Uckelman@uva.nl}}

\maketitle

\begin{abstract}
Dialogue games are two-player logic games between a Proponent who puts forward a logical formula $\phi$ as valid or true and an Opponent who disputes this.  An advantage of the dialogical approach is that it is a uniform framework from which different logics can be obtained through only small variations of the basic rules.  We introduce the \emph{composition problem} for dialogue games as the problem of resolving, for a set $S$ of rules for dialogue games, whether the set of $S$-dialogically valid formulas is closed under modus ponens.  Solving the composition problem is fundamental for the dialogical approach to logic; despite its simplicity, it often requires an indirect solution with the help of significant logical machinery such as cut-elimination.  We give a set $\mathrm{N}$ of dialogue rules that is quite close to a set of rules known to characterize classical propositional logic, and which is evidently well-justified from the dialogical point of view, but whose set $\N$ of dialogically valid formulas is quite peculiar (and non-trivial).  Its peculiarity notwithstanding, the composition problem for $\N$ can be solved directly.
\end{abstract}

\section{Introduction}
Dialogical logic was developed by Lorenzen in the 1950s and by Lorenzen and Lorenz in the 1970s~\cite{Lor,LL}.  Their basis is a two-player logic game between a Proponent ($P$) who lays down a logical formula $\phi$ and attempts to show, by winning the game, that the formula is valid; the other player, Opponent ($O$), disputes this.  As with other logic games~\cite{sep-logic-games}, less attention is paid to actual plays of dialogue games than to the tree of all possible ways the game could go, given an initial formula $\phi$; of particular interest is the existence of a winning strategy for Proponent, which specifies how Proponent can reply to any move of Opponent in such a way that Proponent can win.

Lorenz claimed that Lorenzen's dialogue games offer a new type of semantics for intuitionistic logic and asserts the equivalence between dialogical validity (defined in terms of winning strategies for the Proponent) and intuitionistic derivability~\cite{lor9,lor10}. Lorenz's proof contained some gaps, and later authors sought to fill these gaps; a complete proof can be found in~\cite{fels}.

Dialogue games are not restricted to intuitionistic logic.  By modifying the rules of the game, they can also provide a semantics for classical logic. The dialogical approach can be adapted equally well to capture validity for other logics, such as paraconsistent, connexive, modal and linear logics~\cite{sep-dialogical-logic,ruck}.  All of these extensions of Lorenzen's and Lorenz's initial formulation of dialogue games are achieved by modifying the rules of the game while maintaining the overall dialogical flavor. In this paper we consider a different route: We keep the particle rules unchanged and consider what happens when we \emph{remove} structural rules, rather than adding or modifying them.

The fact that there is no principled restriction on how the dialogical rules can be modified naturally raises the question of when the set of $S$-valid formulas, for a particular set $S$ of dialogical rules, actually corresponds to a logic.  That is, we are interested in identifying desirable properties of the set of $S$-valid formulas in order to give it some logical sensibility.  One such desirable property is that the set be closed under modus ponens: If $\phi$ and $\phi \rightarrow \psi$ are $S$-dialogically valid, then so should $\psi$ be.  We propose to call the problem of resolving whether a set $S$ of rules for dialogue games satisfies this property the \emph{composition problem} for $S$.

Given the aforementioned correspondences between dialogical validity and validity in various logics, a number of dialogical rule sets $S$ for which positive answers to the composition problem already exist, since the sets of formulas valid in connexive logic, various modal logics, etc., are all closed under modus ponens.  However, these positive solutions to the composition problem use a significant amount of logical machinery, specifically translations of dialogical strategies into derivations in some appropriate cut-free proof theory.  These positive results are, to some extent, unsatisfying because they require that one already have a proof theory for the target logic in question, and that this proof system admits cut elimination; in many cases one or both of these may be lacking.  When possible, we prefer direct solutions to the composition problem that, as far as possible, work solely with dialogues and eschew bringing in outside methods.

The plan of the paper is as follows.  In the next section, we provide an introduction to (propositional) dialogical logic. In~\S\ref{comp} we discuss the composition problem in more detail, relating it to the problem of showing that the set of formulas is a logic, and give a dialogical definition of a new sub-classical propositional logic, $\N$.  In~\S\ref{results}, we prove a number of results leading up to a positive solution to the composition problem for $\N$.  Then, in~\S\ref{properties}, we prove some properties about $\N$ towards locating it within the universe of known propositional logics.  We conclude in \S\ref{conc}.

\section{Dialogical logic}\label{dial}
We largely follow Felscher's approach to dialogical logic~\cite{fels}.  For an overview of dialogical logic, see~\cite{sep-dialogical-logic}.

We work with a propositional language; formulas are built from atoms and $\neg$, $\vee$, $\wedge$, and $\rightarrow$.  In addition to formulas, there are the three so-called \emph{symbolic attack} expressions, $?$, $\wedge_{L}$, and $\wedge_{R}$, which are distinct from all the formulas and connectives.  Together formulas and symbolic attacks are called statements; they are what is asserted in a dialogue game.

The rules governing dialogues are divided into two types.  \emph{Particle} rules say how statements can be attacked and defended depending on their main connective.  \emph{Structural} rules define what sequences of attacks and defenses count as dialogues.  Different logics can be obtained by modifying either set of rules.

\begin{table}[t]
  \centering
  \setlength{\tabcolsep}{5pt} 
  \begin{tabular}{c|c|c}
    \textbf{Assertion} & \textbf{Attack} & \textbf{Response}\\
    \hline
    $\phi \wedge \psi$       & $\wedge_{L}$ & $\phi$\\
                             & $\wedge_{R}$ & $\psi$\\
    $ \phi \vee \psi$        & $?$         & $\phi$ or $\psi$\\
    $ \phi \rightarrow \psi$ & $\phi$      & $\psi$\\
    $ \neg\phi$              & $\phi$      & ---                  
  \end{tabular}
\medskip
  \caption{Particle rules for dialogue games}
  \label{tab:particle-rules}
\end{table}
The standard particle rules are given in Table \ref{tab:particle-rules}. According to the first row, there are two possible attacks against a conjunction: The attacker specifies whether the left or the right conjunct is to be defended, and the defender then continues the game by asserting the specified conjunct.  The second row says that there is one attack against a disjunction; the defender then chooses which disjunct to assert.  The interpretation of the third row is straightforward.  The fourth row says that there is no way to defend against the attack against a negation; the only appropriate ``defense'' against an attack on a negation $\neg\phi$ is to continue the game with the new information $\phi$.

These notions can be made precise as follows (following Felscher).
\begin{definition}
  A signed expression is a pair $\langle A,e \rangle$ where $e$ is a statement and $A$ is either $P$ or $O$.  A signed expression is said to be $P$-signed if its first component is $P$ and $O$-signed if its first component is $O$.
  Let $\delta$ be a sequence (that is, a function whose domain is an ordinal) of signed expressions for some and let $\eta$ be a function for which:
\begin{itemize}
\item $\dom(\eta) = \dom(\delta) \setminus \{0\}$, and
\item for every $n$ in $\dom(\delta)$, the value $\eta(n)$ is a pair $[m,Z]$, where $m$ is a natural number less than $n$ and $Z$ is either ``A'' (attack) or ``D'' (defend).
\end{itemize}
Given such functions $\delta$ and $\eta$, the pair $(\delta,\eta)$ is a \emph{dialogue} if it satisfies the three conditions:
\begin{enumerate}
\item If $n$ is even, then $\delta(n)$ is a $P$-signed expression and if $\delta(n)$ is odd, then $\delta(n)$ is an $O$-signed expression.
\item If $\eta(n) = [m,A]$, then $\delta(m)$ is a non-atomic formula and $\delta(n)$ is an attack upon $\delta(m)$ according to the particle rules.
\item If $\eta(n) = [m,D]$, then $\eta(m) = [k,A]$, and $\delta(n)$ is a defense against the attack $\delta(m)$ according to the particle rules.
\end{enumerate}
If $\delta(0)$ is $\langle P,\phi \rangle$, we say that the dialogue $(\delta,\eta)$ commences with $\phi$.
\end{definition}
These skeletal conditions say only that play alternates between Proponent and Opponent (starting with Proponent at move $0$), and that every move (except the initial assertion $\delta(0)$) is either an attack or a defense against some earlier assertion.

Further constraints on the development of a dialogue are given by the structural rules.  In this paper we keep the particle rules fixed, but we shall consider a few variations of the structural rules.  
\begin{definition}
Given a set $S$ of structural rules, an \emph{$S$-dialogue} for a formula $\phi$ is a dialogue commencing with $\phi$ that adheres to the rules of $S$. Proponent \emph{wins} an $S$-dialogue $(\delta,\eta)$ if there is a $k\in\mathbb{N}$ such that $\dom(\delta) = [0,2k]$ and there is no proper extension of $(\delta,\eta)$, that is, there is no signed expression $\langle A,e \rangle$ and no natural number $n$ such that $\delta$ could be extended to the domain $[0,2k+1]$ with the new value $\langle A,e \rangle$, with $\eta$ likewise extended to have the value $[Z,n]$ at $2k+1$.
\end{definition}
\begin{remark}According to this definition, if the dialogue \emph{can} go on, then neither player is said to win; the game proceeds as long as moves are available.\end{remark}

We can now define the notion of an $S$-winning strategy for Proponent. 
\begin{definition}A branch of a rooted tree is a maximal totally ordered set of nodes that includes the root, where order is understood as the immediate ancestor relation.  The \emph{$S$-dialogue tree $T_{S,\phi}$} for a formula $\phi$ is the rooted tree satisfying the conditions:
\begin{itemize}
\item Every branch of $T_{S,\phi}$ is an $S$-dialogue for $\phi$;
\item Every $S$-dialogue for $\phi$ occurs as a branch of $T_{S,\phi}$.
\end{itemize}
\end{definition}
\begin{remark}$S$-dialogue trees for non-atomic formulas can be quite complex, and indeed it often happens that some branches are infinite.  If all branches are infinite, neither player wins.\end{remark}
\begin{definition}An \emph{$S$-winning strategy} $s$ for $P$ for $\phi$ is a rooted subtree of $T_{S,\phi}$ satisfying:
\begin{enumerate}
\item The root of $s$ is the root of $T_{S,\phi}$;
\item Every branch of $s$ is an $S$-dialogue won by $P$;
\item If $k$ is odd and $a$ is a depth-$k$ node of $s$, then $a$ has exactly one child;
\item If $k$ is even and $a$ is a depth-$k$ node of $s$, then $a$ has the same children as does the image of $a$ in $T_{S,\phi}$.
\end{enumerate}
\end{definition}
\begin{remark}Instead of saying ``winning strategy for $P$'' we simply say ``winning strategy''.\end{remark}

This definition says, in the language of trees, that a winning strategy for $P$ is a kind of function saying how Proponent can win given any move by Opponent.  Condition~(1) simply says that the strategy begins at the beginning.  Condition~(2) says that the nodes of a winning strategy are all moves in a dialogue game and that all ways of playing according to the strategy end with a win for Proponent.  Conditions~(3) and~(4) say that Proponent needs to have a unique response to any move the Opponent could make in any of the dialogues that occur as branches in the strategy.

Dialogue games can be used to capture notions of validity.

\begin{definition}
  For a set $S$ of dialogue rules and a formula $\phi$, the relation     $\vDash_{S} \phi$ means that Proponent has an $S$-winning strategy for $\phi$.  If $\nvDash_{S} \phi$, then we say that $\phi$ is $S$-invalid.
\end{definition}
Note that, like usual proof-theoretic characterizations of validity, dialogue validity is an existential notion, unlike the usual model-theoretic notions of validity, which are universal notions.

We now consider some example rule sets.
\begin{definition}
  The rule set $\mathrm{D}$ is comprised of the following structural rules~\cite[p.~220]{fels}:
  \begin{enumerate}
  \item[$(\Dten)$] $P$ may assert an atomic formula only after it has     been asserted by $O$ before: If $\delta(n)=Pa$ and $a$ is atomic, then there exists $m<n$ such that $\delta(m)=Oa$.
  \item[$(\mathrm{D11})$] If $p$ is an $X$-position, and if at $p-1$     there are several open attacks made by $Y$, then only the     \emph{latest} of them may be answered at $p$: If $n(p)=[n,D]$ and     if $n<j<p$, $j-n=0$, $\eta(j)=[i,A]$, then there exists $q$ such     that $j<q<p$, $\eta(q)=[j,D]$.
  \item[$(\mathrm{D12})$] An attack may be answered at most once: For     every $n$ there exists at most one $p$ such that $\eta(p)=[n,D]$.
  \item[$(\Dth)$] A $P$-assertion may be attacked at most once: If $m$     is even, then there exists at most one $n$ such that     $\eta(n)=[m,A]$.
  \end{enumerate}
\end{definition}

Despite its apparent lack of logical meaning, the rule set $\mathrm{D}$ has the following property:
\begin{theorem}[Felscher]
  A formula $\phi$ is intuitionistically valid iff $\vDash_\mathrm{D} \phi$.
\end{theorem}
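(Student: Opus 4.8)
The plan is to route the equivalence through a cut-free sequent calculus for intuitionistic propositional logic --- say Gentzen's $\mathbf{LJ}$, or equivalently a Beth-style tableau system --- using the $\mathrm{D}$-dialogue tree as an intermediary, rather than confronting the Kripke/Heyting semantics directly. The first and most delicate step is a normalization argument on $\mathrm{D}$-dialogues. Although the rule set $\mathrm{D}$ grants both players considerable latitude (Opponent may interleave attacks on several of Proponent's assertions, Proponent may postpone a defense, repeat, etc.), the rules $\mathrm{D11}$ (only the latest open attack may be answered) and $\Dth$ (a $P$-assertion may be attacked at most once) impose a great deal of determinism. I would show that any $\mathrm{D}$-winning strategy can be replaced by one drawn from a more disciplined class --- Felscher's $\mathrm{E}$-dialogues, in which Proponent answers the latest open attack immediately whenever able and never makes a move that is not locally forced --- without loss. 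The key lemma is that the branches Proponent forgoes under this restriction were never needed: thanks to $\Dten$, Proponent can only assert atoms $O$ has already conceded, so deferring or permuting a defense can never be what converts a win into a non-win.

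Next I would set up a two-way translation between $\mathrm{E}$-winning strategies for $\phi$ and cut-free derivations of the sequent ${\Rightarrow}\,\phi$. The correspondence is the expected one: along a dialogue, the $O$-signed formulas that have been asserted form the antecedent of a sequent, and the formula $P$ is currently obliged to defend is its (single) succedent; an attack by $O$ on a $P$-assertion corresponds to applying a left rule, a defense by $P$ corresponds to a right rule, and the branching of the strategy tree at Proponent's choice-points mirrors the branching of the proof. The ``last-duty-first'' discipline enforced by $\mathrm{D11}$ is precisely what keeps the succedent single-conclusioned, so that what one reads off is an $\mathbf{LJ}$-proof rather than an $\mathbf{LK}$-proof --- this is the point at which intuitionistic, as opposed to classical, validity is pinned down. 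Both directions go by induction on the structure of the strategy (resp.\ the derivation), with the particle rules of Table~\ref{tab:particle-rules} matching the logical rules connective by connective; the atom rule $\Dten$ matches the axiom scheme.

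Finally I would invoke the standard soundness and completeness of cut-free $\mathbf{LJ}$: ${\Rightarrow}\,\phi$ is derivable without cut iff $\phi$ is intuitionistically valid. Chaining the equivalences gives $\vDash_\mathrm{D}\phi$ iff $P$ has an $\mathrm{E}$-winning strategy for $\phi$ iff ${\Rightarrow}\,\phi$ has a cut-free derivation iff $\phi$ is intuitionistically valid.

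I expect the main obstacle to be the first step, the reduction of arbitrary $\mathrm{D}$-winning strategies to $\mathrm{E}$-winning strategies. This is where essentially all the combinatorial bookkeeping lives: one must maintain, along each branch, the set of currently open attacks; argue that permuting, delaying, or suppressing Proponent's responses preserves the property of being a win; and contend with the subtlety that a $\mathrm{D}$-dialogue counts as won only when \emph{no} further move is available to \emph{either} player, so one must rule out Opponent stalling tactics and infinite branches that never terminate in a $P$-win. By contrast, the connective-by-connective translation of step two is routine once the normal form is available, and step three is entirely off-the-shelf.
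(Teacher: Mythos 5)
Your proposal follows essentially the same route the paper attributes to Felscher: a two-way translation between $\mathrm{D}$-winning strategies and cut-free intuitionistic sequent/tableau derivations, with the reduction of arbitrary $\mathrm{D}$-strategies to the more disciplined $\mathrm{E}$-dialogues as the combinatorial core. One small correction: rule $\mathrm{E}$ constrains \emph{Opponent} (who may react only to the immediately preceding $P$-statement), not Proponent, though this slip does not affect the overall architecture of the argument.
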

The proof goes by converting deductions in an intuitionistic sequent calculus to $\mathrm{D}$-winning strategies (via tableaux), and vice versa.  (The conversions are computable.)


\begin{definition}
  The rule set $\mathrm{D}+\mathrm{E}$ is $\mathrm{D}$ plus the following rule:
  \begin{enumerate}
  \item[$(\mathrm{E})$] $O$ can react only upon the immediately     preceding $P$-statement: If $n$ in def($\delta$) is odd, then     $\eta(n)=[n-1,Z]$, $Z=A$ or $Z=D$.
  \end{enumerate}
\end{definition}
As Felscher notes, $\mathrm{E}$ implies $\Dth$, and, for odd $p$ or $n$, also $\mathrm{D11}$ and $\mathrm{D12}$.  What is surprising is that we have that $\vDash_\mathrm{D} \phi$ iff $\vDash_{\mathrm{D}+\mathrm{E}} \phi$~\cite[p.~221]{fels}. Classical logic corresponds to the rule set $\mathrm{CL}:=\mathrm{D10+D13+E}$, that is, dropping rules $\mathrm{D11}$ and $\mathrm{D12}$ (though, again, the presence of $\mathrm{E}$ ensures that the effect of $\mathrm{D11}$ and $\mathrm{D12}$ partly remains).

\section{The composition problem}\label{comp}

The \emph{composition problem} for a set of dialogue rules $S$ asks whether the set $S$ of formulas $\phi$ for which Proponent has a winning strategy in the $S$-dialogue game commencing with $\phi$ is closed under modus ponens. That is, if Proponent has a winning $S$-strategy for $\phi$ and one for $\phi\rightarrow\psi$, can we prove that Proponent has one for~$\psi$?  The composition problem deals with the composition of the set of formulas which make up a logic.  A positive solution to the composition problem can be given by giving one to a related problem, the \emph{strategy composition problem}, which for a set of dialogue rules $S$ asks, given winning $S$-strategies for Proponent for formulas $\phi$ and one for $\phi\rightarrow\psi$, can we \emph{compose} these strategies into one for $\psi$?  Clearly, a positive answer to this problem will also be a positive answer to the more general problem, but the reverse is not the case: It may be possible that some set $S$ of formulas is closed under modus ponens, but the winning strategies which generate the set are not composable.  That is, a positive answer to the composition problem combined with a negative answer to the strategy composition problem indicates the non-constructivity of the positive answer.

A uniform solution to the composition problem for a wide range of rule sets $S$ seems unrealistic.  Even in specific cases, it is by no means clear how one would go about giving a positive solution (though of course a single counterexample suffices for a negative solution).  The composition problem for a semantics is closely related to cut elimination in proof theory: One often-utilized method for proving a positive solution to what we are calling the composition problem is to give a correspondence between winning strategies and proofs or tableaux in proof-theory known to admit cut elimination~\cite{fels,ferm}.  However, when there is no known proof theory for a logic characterized by some particular dialogical semantics, such a solution may not be available.

The composition problem is closely linked to what is conventionally means for a set of sentences to be a logic:
\begin{definition}Given a language $\mathcal{L}$, a \emph{logic} is a set $\mathsf{L}$ of $\mathcal{L}$-formulas which is closed under modus ponens; that is, if $\phi\in\mathsf{L}$ and $\phi\rightarrow\psi\in\mathsf{L}$, then $\psi\in\mathsf{L}$ as well.\end{definition}
We are not requiring that $\mathsf{L}$ be closed under uniform substitution.  Defining `logic' in this way highlights the importance of the composition problem: Solving the composition problem for a given set $\mathsf{L}$ is a prerequisite for declaring $\mathsf{L}$ a logic.  And this definition of `logic' helps bring to light our fundamental question: When are dialogical ``logics'' really logics?

We are now in a position to define the logic that will concern us for the rest of the paper.

Adding $\mathrm{E}$ to the rule set $\mathrm{D}$ did not change the set of validities: Both $\mathrm{D}$ and $\mathrm{D+E}$ correspond to intuitionistic logic.  A natural question then is whether the same holds for classical logic: That is, whether $\mathrm{D10+D13}=\mathrm{D10+D13+E}$.  The primary result of our paper is to show that this identity does not hold.  Our counterexample is the logic $\N$. 
\begin{definition}
  Let $\mathrm{N}=\Dten+\Dth$.  The logic $\N$ is the set of formulas for which $P$ has a winning $\mathrm{N}$-strategy.\end{definition}
Surprisingly, not only will $\N$ turn out to be radically different from classical logic, we show that it diverges considerably from intuitionistic logic as well.

Not all combinations of structural rules with the standard particle rules result in a logic: There are (trivial) rule sets where a negative answer to the composition problem can easily be given.  For example, let $\mathrm{CL'}$ be $\mathrm{CL}$ with $\Dten$ modified such that Proponent is now allowed to also assert atoms in defense of disjunctions.  Then, $\vDash_\mathrm{CL'}p\vee\neg p$ and $\vDash_\mathrm{CL'}(p\vee\neg p)\rightarrow p$, but $\nvDash_\mathrm{CL'}p$. The set of formulas $\mathsf{CL'}$ for which Proponent has a winning $\mathrm{CL'}$-strategy is therefore \emph{not} a logic. Thus, we must justify our calling $\N$ a logic.  In the next section, we prove that the solution to the composition problem for $\N$ is positive, by giving a positive answer to the strategy composition problem, and hence that it is closed under modus ponens and it deserves to be called a logic.

\section{A positive solution to the composition problem for $\N$}\label{results}
In this section we prove the main result of the paper, namely, a positive solution to the composition problem for $\N$.  We begin with some results concerning properties of winning $\mathrm{N}$-strategies.

\begin{theorem}\label{o-defense-infinite-branch}Every branch for an $\mathrm{N}$-dialogue tree that contains a defensive move by $O$ either terminates at an $O$-move, or is infinite.\end{theorem}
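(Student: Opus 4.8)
The plan is to exploit the fact that the rule set $\mathrm{N}=\Dten+\Dth$ contains neither $\mathrm{D11}$ nor $\mathrm{D12}$: once $P$ has attacked one of $O$'s non-atomic assertions, $O$ may answer that attack over and over without bound. The only thing that could block this is if the formula $P$ attacked were a negation, since the particle rules provide no defence against an attack on a negation; but that case is ruled out precisely by the hypothesis that the branch contains a defensive move by $O$.

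I would argue by contradiction. Suppose $(\delta,\eta)$ is a branch of $T_{\mathrm{N},\phi}$, that $O$ makes a defensive move at some position $n$ (necessarily odd, by condition~(1)), and yet the branch is finite and does not terminate at an $O$-move, so that $\dom(\delta)=[0,2j]$ for some $j$ with $2j\geq n$ and, being maximal, $(\delta,\eta)$ has no proper extension. First unwind the defensive move: from $\eta(n)=[m,\mathrm{D}]$ and condition~(3) we get $\eta(m)=[k,\mathrm{A}]$ with $k<m$, so $\delta(m)$ is an attack on $\delta(k)$ and $\delta(n)$ is a defence against $\delta(m)$ sanctioned by the particle rules. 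Consulting Table~\ref{tab:particle-rules}, the bare existence of such a defence forces $\delta(k)$ to be a conjunction, a disjunction, or an implication---it cannot be a negation---and in each of these cases the particle rules still sanction some defence against $\delta(m)$.

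Next, construct the forbidden extension: extend $\delta$ to the domain $[0,2j+1]$ by letting $\delta(2j+1)$ be such a defence against $\delta(m)$ and setting $\eta(2j+1)=[m,\mathrm{D}]$. Then check that this is still a dialogue (position $2j+1$ is odd and carries an $O$-signed expression; condition~(3) holds because $\eta(m)=[k,\mathrm{A}]$ and $\delta(2j+1)$ is a sanctioned defence against $\delta(m)$) and still an $\mathrm{N}$-dialogue ($\Dten$ restricts only $P$'s atomic assertions and $\Dth$ restricts only attacks, whereas the new move is an $O$-defence). This contradicts the absence of a proper extension, and hence the branch terminates at an $O$-move or is infinite.

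I expect the only real work to be bookkeeping rather than insight: one must confirm that an $O$-defensive move genuinely certifies that the attacked formula is not a negation, and that re-answering an old attack breaks none of the clauses defining an $\mathrm{N}$-dialogue. It is exactly at this second point that it matters that $\mathrm{D11}$ and $\mathrm{D12}$---the two rules that would forbid re-answering an attack, or answering one that is not the latest still open---are absent from $\mathrm{N}$.
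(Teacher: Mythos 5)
Your proof is correct and follows essentially the same route as the paper's: both hinge on the observation that once $O$ has successfully defended, the attacked formula cannot be a negation, and since $\mathrm{D11}$ and $\mathrm{D12}$ are absent and $\Dth$ forbids only repeated attacks (not repeated defenses), $O$ can re-answer that attack after any $P$-move, so no branch containing an $O$-defense can end at a $P$-move. Your version merely makes the paper's two-sentence argument explicit by exhibiting the forbidden extension and checking the dialogue conditions and the rules $\Dten$ and $\Dth$ one by one.
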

\begin{proof}If a branch of an $\mathrm{N}$-dialogue tree contains such a node   $a$ but does not terminate at an $O$-node, then $P$ has a response to some previous assertion of $O$.  But in this case, $O$ can respond to $P$'s move by repeating the earlier defense of $a$ that occurs in the branch; note that D13 rules out only repeated $O$-attacks, not repeated $O$-defenses.  Thus branches containing a defensive move for $O$ that do not end with an $O$-move are infinite. \qed\end{proof}

\begin{corollary}\label{no-ws-with-o-defending}No $\mathrm{N}$-winning strategy contains a branch where $O$ defends.\end{corollary}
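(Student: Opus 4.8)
The plan is to derive the corollary directly from Theorem~\ref{o-defense-infinite-branch} together with the definition of a winning strategy. Suppose for contradiction that some $\mathrm{N}$-winning strategy $s$ for $P$ for a formula $\phi$ contains a branch $b$ in which $O$ makes a defensive move at some node $a$. By Theorem~\ref{o-defense-infinite-branch}, the branch $b$ (which is in particular a branch of the full dialogue tree $T_{\mathrm{N},\phi}$, since every branch of $s$ is an $\mathrm{N}$-dialogue, and $s$ is a subtree of $T_{\mathrm{N},\phi}$) either terminates at an $O$-move or is infinite.

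First I would rule out the infinite case: condition~(2) in the definition of a winning strategy requires every branch of $s$ to be an $\mathrm{N}$-dialogue \emph{won by} $P$, and by the definition of winning, a dialogue won by $P$ has domain $[0,2k]$ for some $k$, hence is finite. So $b$ must be finite and, by the theorem, terminate at an $O$-move. But a node terminating a branch of $s$ is a leaf, and a win for $P$ requires the last move to be $P$-signed (domain $[0,2k]$, so the final index $2k$ is even, giving a $P$-signed expression by condition~(1) of the definition of a dialogue). An $O$-move occurs at an odd index, so a branch ending at an $O$-move cannot be won by $P$, contradicting condition~(2) again. Either way we contradict the assumption that $s$ is a winning strategy, so no such branch exists.

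The only mild subtlety — and the step I would be most careful about — is making sure the hypothesis of Theorem~\ref{o-defense-infinite-branch} genuinely applies to branches of $s$: the theorem is stated for branches of an $\mathrm{N}$-dialogue tree, so I would note explicitly that each branch of $s$ embeds as (an initial segment of, or exactly) a branch of $T_{\mathrm{N},\phi}$ via the root-preserving inclusion $s \hookrightarrow T_{\mathrm{N},\phi}$, and that the presence of an $O$-defense and the terminal/infinite behaviour are preserved under this identification. Once that bookkeeping is in place, the argument is immediate. I do not expect any real obstacle here; this corollary is essentially just the combination of Theorem~\ref{o-defense-infinite-branch} with the finiteness-and-parity constraints baked into the definition of ``won by $P$''.
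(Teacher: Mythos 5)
Your proof is correct and follows essentially the same route as the paper's: apply Theorem~\ref{o-defense-infinite-branch} to the offending branch and observe that a winning strategy can have neither infinite branches nor branches terminating at an $O$-move. The extra bookkeeping you supply (the parity argument and the identification of branches of $s$ with branches of $T_{\mathrm{N},\phi}$) is just a more explicit version of what the paper leaves implicit.
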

\begin{proof}If $s$ were an $\mathrm{N}$-winning strategy with a branch that contains an defensive $O$-node $a$, then, by Theorem~\ref{o-defense-infinite-branch}, every branch of $s$ containing $a$ either terminates at an $O$-move or is infinite.  But since $s$ is a winning strategy, there can be no branches of $s$ that terminate at an $O$-move, nor can there be any infinite branches. \qed\end{proof}
The corollary implies that when every branch of the $\mathrm{N}$-dialogue tree for a formula $\phi$ contains a defensive move by $O$, then $\phi$ is $\mathrm{N}$-invalid.  The converse, interestingly, fails: In the $\mathrm{N}$-dialogue tree for $(p\rightarrow (\neg q\vee\neg r))\rightarrow((\neg p\rightarrow\neg q)\vee(\neg p\rightarrow\neg r)))$, which is $\mathrm{N}$-invalid, $O$ never defends in any branch.

\begin{lemma}[Weakening]\label{weakening-is-valid} If $\Nvalid\psi$, then $\Nvalid\phi \rightarrow \psi$, for all formulas $\phi$.\end{lemma}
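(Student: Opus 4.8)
### Proof proposal

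The plan is to take a winning $\mathrm{N}$-strategy $s$ for $\psi$ and build from it a winning $\mathrm{N}$-strategy for $\phi\rightarrow\psi$ by prefixing the obvious opening moves. In the dialogue for $\phi\rightarrow\psi$, $P$ begins by asserting $\phi\rightarrow\psi$ at move $0$; by the particle rules, the only attack $O$ has is to assert $\phi$, and $P$'s only possible reply is to defend by asserting $\psi$. So any branch of any $\mathrm{N}$-dialogue tree for $\phi\rightarrow\psi$ in which $P$ has survived past move $2$ begins

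\begin{center}
$\langle P, \phi\rightarrow\psi\rangle$, \quad $\langle O, \phi\rangle$, \quad $\langle P, \psi\rangle$, \quad \dots
\end{center}

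From move $2$ onward, $P$ has asserted $\psi$ exactly as at the root of $s$, so the plan is to have $P$ follow $s$ from that point on. I would define the new strategy $s'$ to consist of this three-node stem followed by a copy of $s$ grafted at the node $\langle P,\psi\rangle$, and then check that $s'$ is a legal $\mathrm{N}$-dialogue tree fragment and that it satisfies the four conditions in the definition of winning strategy.

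The key steps, in order: (1) Observe that the stem is forced — $O$ has a unique attack on $\phi\rightarrow\psi$ and $P$ has a unique defense — so conditions (3) and (4) of the winning-strategy definition are satisfied at depths $1$ and $2$, and nothing is lost by fixing these moves. (2) Verify that appending the moves of $s$ after the stem still yields legal $\mathrm{N}$-dialogues: the structural rules in $\mathrm{N}$ are $\Dten$ and $\Dth$. For $\Dten$, $P$ asserts an atom in $s'$ only where it did in $s$, and the atoms $O$ had conceded in $s$ are all still conceded in $s'$ (the stem adds $O\phi$, which only helps), so $\Dten$ is preserved; here one should note that the extra concession $O\phi$ could in principle give $P$ new atoms to play, but we simply do not use them — $s'$ replays $s$ verbatim. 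For $\Dth$, the $P$-assertion $\phi\rightarrow\psi$ at move $0$ is attacked exactly once (in the stem), and every other $P$-assertion in $s'$ sits in the copied part of $s$ and is attacked at most once there, with the move-index shift by $2$ not affecting the count. (3) Conclude that every branch of $s'$ is an $\mathrm{N}$-dialogue won by $P$: a branch of $s'$ is the stem followed by a branch of $s$, which terminates at a $P$-move with no legal extension, exactly because the corresponding branch of $s$ did (and the shift in indices and the harmless extra concession $O\phi$ do not create any new available move for $O$). (4) Check the parity and child-count conditions: depths in $s'$ are depths in $s$ shifted by $2$, so even/odd depth is preserved, $O$-nodes (odd depth) have a unique child as in $s$ (and the unique stem $O$-node $\langle O,\phi\rangle$ has the unique child $\langle P,\psi\rangle$), and $P$-nodes (even depth) have all the children the $\mathrm{N}$-dialogue tree for $\phi\rightarrow\psi$ affords — at the root that is just $\langle O,\phi\rangle$, and below the stem it coincides with what $s$ demanded because the available $O$-moves are the same.

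The main obstacle is step (2), specifically making sure the extra assumption $\phi$ that $O$ concedes at move $1$ does not secretly break the faithful replay of $s$ — it cannot make a previously legal $P$-move illegal (concessions are monotone for $\Dten$), but one must be careful that we are claiming $s'$ is a legal \emph{subtree of the $\mathrm{N}$-dialogue tree for $\phi\rightarrow\psi$}, which requires that at each even node we have included \emph{all} of $O$'s options, not just those in $s$; since $O$'s available attacks and defenses depend only on the $P$-assertions standing and the open attacks — which are identical (up to the index shift) to the situation in $s$ — the set of $O$-children matches, and the construction goes through. A secondary point to handle cleanly is the degenerate case where $\psi$ is atomic: then $s$ is the single root node $\langle P,\psi\rangle$ (as $P$ cannot even assert $\psi$ unless... — but $\Nvalid\psi$ with $\psi$ atomic is impossible by $\Dten$, so this case is vacuous), and more generally one should simply note the argument is uniform in the shape of $\psi$.
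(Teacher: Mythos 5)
Your proposal is correct and follows essentially the same route as the paper: prefix the forced two-move stem ($P$ asserts $\phi\rightarrow\psi$, $O$ attacks with $\phi$, $P$ defends with $\psi$), graft a reference-shifted copy of the winning strategy for $\psi$, and verify that $\Dten$, $\Dth$, and the winning-strategy conditions survive. The one point to state more carefully (which the paper makes explicit) is that the standing $P$-assertions below the stem are not literally identical to those in $s$ --- there is the extra assertion $\phi\rightarrow\psi$ at move $0$ --- but $\Dth$ forbids $O$ from attacking it a second time, so $O$ indeed gains no new options.
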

\begin{proof}Let $s_{\psi}$ be an $\mathrm{N}$-winning strategy for $\psi$.  The $\mathrm{N}$-dialogue tree $T_{\phi \rightarrow \psi}$ for $\phi \rightarrow \psi$ begins with $P$'s assertion of $\phi \rightarrow \psi$, followed by $O$'s attacking assertion $\phi$.  These first two nodes of $T_{\phi \rightarrow \psi}$ themselves form a two-element chain, $c$. Carry out the following modification on $s_{\psi}$:
\begin{itemize}
\item The root node $r$ of $s_{\psi}$ is an assertion by $P$ of $\psi$, but it is neither an attack or a defense, and it refers to no prior assertion.  Change $r$ so that it is now an assertion by $P$ of $\psi$, but it is now to be understood as an attack against move 1 (which, in the tree $s_{\phi \rightarrow \psi}$ that we eventually define, will be $O$'s attacking assertion $\phi$ against $P$'s assertion of $\phi \rightarrow \psi$);
\item Every non-root node of $s_{\psi}$ refers to some previous assertion number $k$; change this to $k+2$.
\end{itemize}
Call the result of this modification $s_{\psi}^{\prime}$.  Let $s_{\phi \rightarrow \psi}$ by the result of grafting $s_{\psi}^{\prime}$ to the end of $c$.  Claim: $s_{\phi \rightarrow \psi}$ is an $\mathrm{N}$-winning strategy for $P$ for $\phi \rightarrow \psi$.  That $s_{\phi \rightarrow \psi}$ is a subtree of the full $\mathrm{N}$-dialogue tree $T_{\phi \rightarrow\psi}$ with the same root should be clear (the surgery we carried out on $s$ was intended to ensure that).  The more interesting possibility that needs to be ruled out is that in $T_{\phi \rightarrow \psi}$ Opponent can respond in more ways than were possible in $T_{\psi}$.  But this cannot be: $\Dth$ is still in force, so that $O$ can attack $P$'s assertions at most once.  This implies that $O$'s attack against the initial assertion $\phi \rightarrow \psi$ cannot be repeated, so that any attack by $O$ must be against some assertion by $P$ made at some depth $\geq 2$ in $T_{\phi \rightarrow \psi}$; we need not consider defensive moves by $O$ because of Corollary \ref{no-ws-with-o-defending}.  It remains only to show that every branch of $s_{\phi \rightarrow \psi}$ is finite and terminates with a $P$-move.  But this is so because $s_{\psi}$ has the same property. \qed
\end{proof}

\begin{theorem}[Characterization of implication]\label{char} Every $\mathrm{N}$-valid implication $\phi\rightarrow\psi$ satisfies one of the following three conditions:
\begin{enumerate}
\item $\phi$ is atomic.
\item $\phi$ is negated.
\item $\Nvalid\psi$.
\end{enumerate}
\end{theorem}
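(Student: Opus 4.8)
The plan is to examine, inside an arbitrary $\mathrm{N}$-winning strategy $s$ for $\phi\rightarrow\psi$, the move $P$ is forced to make in reply to $O$'s opening attack, and --- when $\phi$ is neither atomic nor negated --- to extract a winning strategy for $\psi$ from $s$. The first step is a structural fact distilled from Corollary~\ref{no-ws-with-o-defending}: in \emph{any} $\mathrm{N}$-winning strategy, $P$ never attacks an $O$-asserted conjunction, disjunction, or implication. For suppose $P$ did attack such an $O$-assertion at some $P$-node $b$. By condition~(4) of the definition of a winning strategy, $b$ carries all of the children it has in the full dialogue tree; but the particle rules for $\wedge$, $\vee$, and $\rightarrow$ always leave $O$ a legal defending response to such an attack, and $O$ is never restricted from defending. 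That response is then a child of $b$ lying on a branch of $s$ along which $O$ defends, contradicting Corollary~\ref{no-ws-with-o-defending}. Hence the only $O$-assertions $P$ ever attacks in a winning strategy are negations.

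Now fix an $\mathrm{N}$-winning strategy $s$ for $\phi\rightarrow\psi$. Its first two moves are forced: $P$ asserts $\phi\rightarrow\psi$ at move~$0$, and the unique legal reply is $O$'s attack asserting $\phi$ at move~$1$. At move~$2$ we have $\eta(2)=[m,Z]$ with $m<2$; the case $m=0$ cannot arise, since $\Dth$ forbids a second attack on move~$0$ and $[0,D]$ is not a well-formed reference, so $m=1$ and $P$ either attacks $O$'s assertion of $\phi$ or defends move~$1$ by asserting $\psi$. If $\phi$ is atomic then it cannot be attacked and the theorem holds; if $\phi$ is a negation the theorem likewise holds. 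In the remaining case $\phi$ is a conjunction, disjunction, or implication, so the structural fact just proved rules out $P$'s attacking $O$'s $\phi$, and therefore $P$ must assert $\psi$ at move~$2$.

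It remains to derive $\Nvalid\psi$. The idea is to take the subtree $s'$ of $s$ rooted at this depth-$2$ assertion of $\psi$ and turn it into an $\mathrm{N}$-winning strategy for $\psi$ by running the surgery from the proof of Lemma~\ref{weakening-is-valid} in reverse: demote the root of $s'$ from a defense of move~$1$ to the initial assertion of $\psi$, and lower every move-reference occurring in $s'$ by~$2$. Provided $P$ answers move~$1$ only once in $s$ (addressed below), every reference in $s'$ points to a move of depth $\geq 2$, so this relabeling is well-defined. That $O$ acquires no extra options in the stand-alone $\psi$-game is then checked exactly as in Lemma~\ref{weakening-is-valid}: $\Dth$ keeps $O$ from re-attacking $\phi\rightarrow\psi$, so every $O$-attack in $s'$ is aimed at a $P$-assertion of depth $\geq 2$, and Corollary~\ref{no-ws-with-o-defending} lets us disregard $O$-defenses; finiteness of branches and their terminating with a $P$-move are inherited from $s$.

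The delicate point --- and the step I expect to be the real obstacle --- is precisely that $\mathrm{N}$ omits $\mathrm{D12}$, so an attack may be answered more than once: along some branches $P$ may \emph{re-answer} $O$'s opening attack, asserting $\psi$ a second time as a further defense of move~$1$. Such a node refers to move~$1$ and has no analogue once move~$1$ is deleted in passing to the $\psi$-game, which is what would break the relabeling above. The plan is to normalize $s$ first, so that move~$1$ is answered exactly once --- for instance by passing to an $\mathrm{N}$-winning strategy for $\phi\rightarrow\psi$ with the fewest nodes. Intuitively this is possible because re-asserting $\psi$ only supplies $O$ with one more formula to attack and never removes any of its options, so the subtree below any superfluous re-answer can be pruned (discarding the phantom $\psi$-assertion and everything referring to it) and re-attached in its place. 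Making this normalization rigorous, and in particular arguing that it terminates, is where the real work concentrates; once it is in hand, the extraction above applies unchanged.
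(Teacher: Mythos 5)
Your proposal follows essentially the same route as the paper's own proof: use Corollary~\ref{no-ws-with-o-defending} to show that in a winning strategy $P$ never attacks an $O$-asserted conjunction, disjunction, or implication (since the particle rules would then hand $O$ a defensive reply, which by condition~(4) of the definition of a strategy must appear as a branch), conclude that when $\phi$ is neither atomic nor negated $P$ is forced to defend move~1 by asserting $\psi$, and then extract a winning strategy for $\psi$ by deleting the first two moves and shifting all reference labels down by~2. The one place you diverge is the ``delicate point'' you flag at the end: because $\mathrm{N}$ omits $\mathrm{D12}$, $P$ may legally defend against move~1 more than once, and a second assertion of $\psi$ with reference $[1,D]$ has no image under the relabeling. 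This is a genuine observation --- the paper's own proof simply renumbers $k$ to $k-2$ and is silent on this case --- so you have not missed anything the paper supplies; rather, you have located an informality that both arguments share. What you offer in its place (pass to a minimal winning strategy, or prune and re-attach the subtree below a superfluous re-answer) is only a sketch, so as written your proof is no more complete than the paper's on this point, but it is more candid about where the remaining work lies. If you want to discharge it, the cleanest route is probably your minimality idea: in a node-minimal winning strategy a second defense of move~1 can be shown redundant, since it only gives $O$ an additional attackable copy of $\psi$ and removes none of $O$'s options, so excising it yields a strictly smaller winning strategy, a contradiction.
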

\begin{proof}Case (3) is just a restatement of Lemma \ref{weakening-is-valid}.  Suppose now that $\phi$ is not atomic and $\psi$ is not an $\mathrm{N}$-validity.  Proceed by cases:
\begin{itemize}
\item If $\phi$ is an implication $\alpha \rightarrow \beta$, then the $\mathrm{N}$-dialogue tree opens with $O$ attacking the initial statement by asserting $\alpha \rightarrow \beta$.  In any $\mathrm{N}$-winning strategy for $(\alpha \rightarrow \beta) \rightarrow \psi$, Proponent cannot attack $O$'s assertion of $\alpha \rightarrow \beta$, because this leaves open the possibility of a defense by $O$, contradicting Corollary~\ref{no-ws-with-o-defending}.  Thus, any winning strategy $s$ must choose, for $P$'s response to $O$'s initial attack, to defend by asserting the consequent $\psi$ of the entire formula, and no branch of $s$ can attack the antecedent implication $\phi \rightarrow \psi$.  By renumbering the reference labels for nodes of $s$ below the $P$'s assertion of $\psi$ in the obvious way (renumber $k$ to $k - 2$), we obtain a winning strategy for $\psi$, contradicting our assumption.
\item Likewise, $\phi$ cannot be a disjunction, nor could it be a conjunction, for similar reasons: In any $\mathrm{N}$-winning strategy $s$ for $(\alpha \vee \beta) \rightarrow \psi$ (or for $(\alpha \wedge \beta) \rightarrow \psi$), Proponent never attacks $\alpha \vee \beta$ (respectively, $\alpha \wedge \beta)$, so we can recover from $s$ a winning strategy for $\psi$, contradicting our assumption.
\end{itemize}
The only possibility left is that $\phi$ is a negation. \qed
\end{proof}
\begin{remark}To illustrate cases (1) and (2) of this classification of valid implications, consider $p \rightarrow p$ and $\neg p \rightarrow \neg p$.  Illustrating (2), we have the more interesting validities $\neg(\phi\wedge\psi)\rightarrow(\neg\phi\vee\neg\psi)$ and $\neg(\phi\vee\psi)\rightarrow(\neg\phi\wedge\neg\psi)$ (the only directions of De Morgan's laws that are $\mathrm{N}$-valid).\end{remark}

\begin{remark}These conditions are not sufficient: The implicational version of mo\-dus ponens, $p\rightarrow((p\rightarrow q)\rightarrow q)$, has an atomic antecedent, but is (surprisingly) $\mathrm{N}$-invalid.\end{remark}

From the Characterization Theorem, with the help of a few simple lemmas, we can prove a positive solution to the composition problem for $\N$:

\begin{lemma}\label{atom}No atomic formula is $\mathrm{N}$-valid.\end{lemma}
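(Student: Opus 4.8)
The plan is to push the constraint imposed by $\Dten$ all the way back to the opening move. By definition a dialogue commencing with an atomic formula $a$ has $\delta(0)=\langle P,a\rangle$; but there is no position before $0$, hence in particular no earlier $O$-assertion of $a$, so this very first move already violates $\Dten$ (``$P$ may assert an atom only after $O$ has asserted it before''). That observation is essentially the whole argument; what remains is to package it so that no degenerate loophole survives.

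To that end I would first note that the particle rules supply no attack against an atom, so by condition~2 in the definition of a dialogue $O$ cannot move at position~$1$ in reply to $\delta(0)=\langle P,a\rangle$; hence the only sequence that is even a candidate for an $\mathrm{N}$-dialogue commencing with $a$ is the one-element sequence $(\langle P,a\rangle)$. That sequence is not an $\mathrm{N}$-dialogue: instantiating $\Dten$ at $n=0$ would require some $m<0$ with $\delta(m)=\langle O,a\rangle$, which is impossible. Consequently there are no $\mathrm{N}$-dialogues commencing with $a$ at all, so the $\mathrm{N}$-dialogue tree for $a$ has no branch that $P$ wins. Since a winning $\mathrm{N}$-strategy for $a$ is in particular a rooted tree every branch of which is an $\mathrm{N}$-dialogue won by $P$, and a rooted tree always has at least one branch, no such strategy can exist; hence $\nvDash_{\mathrm{N}}a$, and, $a$ being arbitrary, no atomic formula is $\mathrm{N}$-valid.

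There is no real obstacle here. The single point that merits explicit attention is the degenerate reading under which one might try to count the bare root $\langle P,a\rangle$ as a ``vacuous'' winning strategy for $a$: note that $\dom(\delta)=[0,0]$ has exactly the shape demanded of a $P$-win and that no proper extension exists, so absent $\Dten$ this length-one play would indeed be a win for $P$; it is precisely the invocation of $\Dten$ at move~$0$ that closes this gap. This lemma then feeds into the solution of the composition problem, where it serves to discard the case in which the antecedent of an $\mathrm{N}$-valid implication $\phi\rightarrow\psi$ is atomic (case~(1) of the Characterization Theorem).
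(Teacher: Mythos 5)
Your argument is correct and is essentially the paper's own proof, which simply observes that by Rule $\Dten$ the set of $\mathrm{N}$-dialogue trees for an atomic formula is empty; you have merely spelled out the instantiation of $\Dten$ at move $0$ and closed the degenerate ``vacuous win'' loophole explicitly. Nothing further is needed.
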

\begin{proof}By Rule~$\Dten$, the set of $\mathrm{N}$-dialogue trees for an atomic formula $p$ is empty. \qed\end{proof}

\begin{corollary}$\N$ is consistent.\end{corollary}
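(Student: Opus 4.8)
The plan is to treat ``consistent'' in the sense appropriate to a set of formulas closed under modus ponens: $\N$ is consistent precisely when it is a \emph{proper} subset of the set of all $\mathcal{L}$-formulas, i.e.\ when $\N$ is non-trivial. Since the language carries no primitive falsum, this is the strongest notion of consistency naturally available at this point, and it is the one that matters for the paper's aim of declaring $\N$ a genuine logic rather than the degenerate ``logic'' consisting of every formula. Read this way, the corollary drops out of the preceding lemma with essentially no further work.

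Concretely, I would argue as follows. Pick any atom $p$ of $\mathcal{L}$. By Lemma~\ref{atom}, $\nvDash_{\mathrm{N}} p$, so $p \notin \N$. Since $p$ is itself an $\mathcal{L}$-formula, $\N$ does not contain every formula; hence $\N$ is a proper subset of the set of $\mathcal{L}$-formulas, which is exactly the claim. Unwinding one step further, the only real input is Rule~$\Dten$: it forbids $P$ from opening with an atom, so the $\mathrm{N}$-dialogue tree for an atom is empty, a fortiori there is no winning strategy, and the atom is $\mathrm{N}$-invalid.

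I do not expect any genuine obstacle here; all the content has already been discharged in Lemma~\ref{atom}, and thence in $\Dten$. The only point requiring a moment's care is which notion of ``consistent'' is intended: one might instead ask for the negation-theoretic version, that there is no $\phi$ with both $\phi \in \N$ and $\neg\phi \in \N$. That stronger statement is not needed for the corollary as stated, and, given how weak $\N$ turns out to be (witness the failure of the implicational modus ponens noted above), it would not follow automatically from closure under modus ponens in any case; so I would confine the proof to the non-triviality reading, which is both sufficient for the paper's purposes and immediate.
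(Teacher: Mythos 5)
Your proof is correct and matches the paper's intent exactly: the corollary is stated immediately after Lemma~\ref{atom} with no separate argument, precisely because consistency here means non-triviality and an atom witnesses that $\N$ omits some formula. Your added remark on which notion of consistency is in play is a reasonable clarification but not a departure from the paper's approach.
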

Thus, the composition problem for $\N$ is not trivially solved.

\begin{theorem}\label{double-negation}If $\Nvalid{\neg\phi}$, then $\phi$ is a negation $\neg\psi$ and $\Nvalid{\psi}$.\end{theorem}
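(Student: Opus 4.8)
The plan is to prove the two halves of the conclusion in turn: first that $\Nvalid{\neg\phi}$ already forces $\phi$ to be a negation, and then, writing $\phi=\neg\psi$, that a winning $\mathrm{N}$-strategy for $\neg\neg\psi$ yields one for $\psi$ by deleting a forced initial segment.

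\emph{$\phi$ must be a negation.} Assume $\Nvalid{\neg\phi}$ and fix a winning $\mathrm{N}$-strategy $s$ for $\neg\phi$. Trace the forced opening of any $\mathrm{N}$-dialogue commencing with $\neg\phi$: at move $0$, $P$ asserts $\neg\phi$; at move $1$, $O$'s only legal move is to attack it by asserting $\phi$; at move $2$, $P$ cannot defend — the particle rules provide no defense against an attack on a negation — so $P$'s only option is to attack the single available $O$-assertion $\phi$ (the earlier assertion $\neg\phi$ is already spent as a target by $\Dth$). If $\phi$ were atomic, $P$ would be stuck at move $2$, since atomic formulas cannot be attacked, so $s$ could contain no move for $P$ there, contradicting that $s$ is winning. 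If $\phi$ were a conjunction, a disjunction, or an implication, then $P$ would be forced to attack $\phi$ at move $2$, and in each of these cases the attack admits a defensive reply by $O$ (asserting a conjunct, asserting a disjunct, or asserting the consequent — and in the implication case, if the antecedent is atomic then $P$ cannot even launch the attack, and otherwise $O$'s defense is available); since move $2$ is a $P$-node, every such defensive $O$-reply must occur in $s$, contradicting Corollary~\ref{no-ws-with-o-defending}. Hence $\phi$ is a negation $\neg\psi$.

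\emph{$\psi$ is $\mathrm{N}$-valid.} Fix a winning $\mathrm{N}$-strategy $s$ for $\neg\neg\psi$. By the analysis just given its first three moves are forced: $P$ asserts $\neg\neg\psi$ (move $0$), $O$ attacks with $\neg\psi$ (move $1$), and $P$ attacks $\neg\psi$ by asserting $\psi$ (move $2$); since $O$ has not asserted $\psi$, Rule $\Dten$ forces $\psi$ to be non-atomic. Let $s'$ be the subtree of $s$ rooted at the move-$2$ node, with every move index and every reference decremented by $2$ and the new root reinterpreted as the reference-free initial assertion of $\psi$. I claim $s'$ is a winning $\mathrm{N}$-strategy for $\psi$. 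Its branches are tails of branches of $s$, hence finite and $P$-terminated; the conditions on numbers of children are inherited from $s$; Rule $\Dten$ is unaffected by deleting moves $0$ and $1$, since neither is an atomic assertion; and Rule $\Dth$ is unaffected, since move $0$, a $P$-assertion already attacked at move $1$, is never attacked again. The point needing care is that the deleted segment is \emph{inert}: no move of $s$ below move $2$ refers to move $0$ or move $1$. References to move $0$ are impossible, since $\Dth$ blocks a second attack on it and a defense cannot point at the reference-free move $0$; and a \emph{defensive} reference to move $1$ is impossible because an attack on a negation has no defense.

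The step I expect to be the main obstacle is the one remaining possibility: an \emph{attacking} reference to move $1$ deeper in $s$, i.e.\ $P$ re-attacking $\neg\psi$ by re-asserting $\psi$. Unlike $\mathrm{D11}$ and $\mathrm{D12}$, the rules of $\mathrm{N}$ do not forbid attacking the same $O$-assertion twice, and such repetitions do genuinely occur in full $\mathrm{N}$-dialogue trees, so this case cannot be dismissed out of hand. I would handle it using that a winning strategy is finitely branching with all branches finite, hence a finite tree by K\"onig's lemma, together with the fact that re-asserting $\psi$ opens nothing for $O$ that was not already open after move $2$ (Corollary~\ref{no-ws-with-o-defending} already forces $O$ never to defend, so $P$'s only attackable $O$-statements are negations, and the earlier moves stay inert): choosing $s$ with fewest such re-attacks and surgically replacing a deepest one by the appropriate translate of the move-$2$ subtree would contradict minimality, so $s$ may be taken to have none. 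Alternatively, the whole argument can be run by induction on the build-up of $\psi$, which supplies inertness of the initial segment as part of the induction hypothesis. Once this case is settled, the renumbering that defines $s'$ is legitimate and $\Nvalid{\psi}$ follows.
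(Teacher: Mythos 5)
Your proof takes essentially the same route as the paper's: the same case analysis ruling out atomic, conjunctive, disjunctive, and implicational $\phi$ (atoms cannot be attacked, and for the compound cases the forced attack at move $2$ makes an $O$-defense available, contradicting Corollary~\ref{no-ws-with-o-defending}), followed by the same surgery of deleting the forced two-move prefix of a strategy for $\neg\neg\psi$ and renumbering references by $-2$. The one point where you go beyond the paper is the ``main obstacle'' you flag, and you are right to flag it: the paper disposes of the renumbering in one sentence, asserting that neither player can refer to moves $0$ or $1$ ``by the particle rule for negation and $\Dth$,'' but $\Dth$ only limits attacks on \emph{$P$-assertions}, so nothing in $\mathrm{N}$ forbids $P$ from attacking $O$'s move-$1$ assertion $\neg\psi$ a second time deeper in the strategy. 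That is a gap in the published argument, not a difficulty of your own invention. Your sketched repair (take a strategy with fewest such re-attacks and splice, or induct on $\psi$) is the right kind of fix, but as written it is only a sketch: the claim that re-asserting $\psi$ ``opens nothing for $O$'' is not literally true, since $\Dth$ is counted per occurrence and a fresh $P$-assertion of $\psi$ is a fresh target for $O$; the splicing step therefore has to say explicitly how $O$'s attack on the new copy is answered by a relabelled copy of the subtree that answers $O$'s attack on the move-$2$ copy, and why this terminates. Modulo writing that out carefully, your argument is correct and, on this particular point, more careful than the paper's own proof.
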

\begin{proof}By cases:
  \begin{itemize}
  \item $\phi$ cannot be atomic, since no negated atoms are $\mathrm{N}$-valid, by $\Dten$ and the particle rule for negation.
\item $\phi$ cannot be a disjunction $\alpha \vee \beta$ because, once $O$ attacks the negated disjunction by asserting $\alpha \vee \beta$, the only response for $P$ is to attack the disjunction; $O$ can (indeed, must) defend by selecting either the left or the right disjunct, so by Corollary \ref{no-ws-with-o-defending} no winning strategy exists from this unique initial segment of the $\mathrm{N}$-dialogue tree for $\neg (\alpha \vee \beta)$.
\item Likewise, $\phi$ cannot be an implication or a conjunction.
\end{itemize}
Thus $\phi = \neg\psi$ for some formula $\psi$.  A winning $\mathrm{N}$-strategy $s_\psi$ for $P$ for $\psi$ can be obtained by from a winning strategy $s_{\neg\neg\psi}$ for $\neg\neg\psi$ and noting that, by the particle rule for negation, the winning strategy for $\neg\neg\psi$ begins with a unique initial segment of length two, after which $P$ asserts $\psi$, attacking $O$'s assertion of $\neg\psi$.  Simply remove the root and its unique successor from $s_{\neg\neg\psi}$, declare that $P$'s assertion at the new root is neither an attack nor a response, and is a response to no move of $O$; then renumber the reference labels $k$ on all nodes of $s_{\neg\neg\psi}$ by $k - 2$.  This renumbering is coherent because neither $P$ nor $O$ can attack or respond to moves $0$ and $1$, by the particle rule for negation and $\Dth$, so all reference labels are at least $2$. \qed
\end{proof}

\begin{theorem}[Composition]If $\Nvalid{\phi}$ and $\Nvalid{\phi \rightarrow \psi}$, then $\Nvalid{\psi}$.\end{theorem}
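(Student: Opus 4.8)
The plan is to reduce the theorem, via the Characterization of implication (Theorem~\ref{char}), to a single genuinely hard case, and then to settle that case by surgery on winning strategies of the sort already carried out in Lemma~\ref{weakening-is-valid} and Theorem~\ref{double-negation}. Applying Theorem~\ref{char} to $\phi\rightarrow\psi$ leaves three possibilities: $\phi$ is atomic, $\phi$ is a negation, or $\Nvalid\psi$. The third is exactly the desired conclusion; the first is impossible, since together with $\Nvalid\phi$ it would contradict Lemma~\ref{atom}. So suppose $\phi=\neg\chi$. Then $\Nvalid{\neg\chi}$, and Theorem~\ref{double-negation} gives $\chi=\neg\xi$ with $\Nvalid\xi$; hence $\phi=\neg\neg\xi$, and it remains to deduce $\Nvalid\psi$ from $\Nvalid{\neg\neg\xi\rightarrow\psi}$ together with $\Nvalid\xi$.

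To that end I would fix a winning $\mathrm{N}$-strategy $s$ for $\neg\neg\xi\rightarrow\psi$. Its opening is forced: $P$ asserts $\neg\neg\xi\rightarrow\psi$ and $O$ attacks it by asserting $\neg\neg\xi$; by $\Dth$ the root is never attacked again, and by Corollary~\ref{no-ws-with-o-defending} $O$ never defends anywhere in $s$. At move~$2$, $P$ must answer $O$'s attack, with exactly two options: defend by asserting $\psi$, or counter-attack $O$'s $\neg\neg\xi$ by asserting $\neg\xi$. If $P$ defends, there is nothing left to do: from move~$2$ on, $O$ can attack nothing but $P$'s own assertion of $\psi$ (it cannot re-attack the root by $\Dth$, nor defend by Corollary~\ref{no-ws-with-o-defending}), so deleting moves~$0$ and~$1$ and decreasing every reference label by~$2$ turns the part of $s$ below move~$2$ into a winning $\mathrm{N}$-strategy for $\psi$, exactly as in the surgeries of Lemma~\ref{weakening-is-valid} and Theorem~\ref{double-negation}.

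The work, then, lies in the other case, where $P$ counter-attacks. The key observations are: whenever $P$ attacks an $O$-assertion, $O$ must reply, and if that assertion were a conjunction, disjunction, or implication the reply would be a \emph{defence}, contradicting Corollary~\ref{no-ws-with-o-defending}; so $P$ may attack only $O$-asserted negations. But $\Nvalid\xi$ forces, on iterating Theorem~\ref{double-negation}, that $\xi$ is a finite tower of double negations over a conjunction, disjunction, or implication --- which by Lemma~\ref{atom} is non-atomic and not itself a negation --- so every chain of $P$-attacks on $O$-asserted negations bottoms out at an $O$-assertion that $P$ can neither attack (the Opponent would have to defend) nor exploit ($O$ asserts no atom along the way, and a non-atomic $O$-assertion licenses no new $P$-move under~$\Dten$). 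Since all branches of a winning strategy are finite and $P$-won, and $O$ is stuck at the end of a branch only when every non-atomic $P$-assertion on it has already been attacked, $P$ cannot win a branch without eventually asserting $\psi$ in answer to $O$'s move-$1$ attack; and since $O$ is forced at every move before that assertion, there is a single node $w$ --- at the bottom of this forced spine --- where $P$ first does so. Extracting the subtree of $s$ rooted at $w$ and decreasing its reference labels then yields a winning $\mathrm{N}$-strategy for $\psi$, completing the proof. I expect the hard part to be exactly this extraction: showing a winning strategy can be taken so that below $w$ the Proponent never reaches back into the stripped-off $\neg\neg\xi$-prefix in a way the relabeling cannot absorb. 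It is here that Corollary~\ref{no-ws-with-o-defending} --- forbidding $P$ any attack on an $O$-asserted conjunction, disjunction, or implication --- together with the double-negation shape of $\mathrm{N}$-valid formulas (Theorem~\ref{double-negation}), does the decisive work.
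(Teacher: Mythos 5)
Your reduction is exactly the paper's: apply the Characterization Theorem to $\phi\rightarrow\psi$, dispose of the atomic case with Lemma~\ref{atom}, and use Theorem~\ref{double-negation} to put $\phi$ in the form $\neg\neg\xi$ with $\Nvalid{\xi}$; the subcase where $P$ defends immediately is also handled the same way. The gap is the one you flag yourself and then leave open: extracting a winning strategy for $\psi$ as the subtree rooted at the first node $w$ where $P$ asserts $\psi$. The obstacle there is real, not a formality. Since $\mathrm{N}$ lacks $\mathrm{D11}$ and $\mathrm{D12}$, and $\Dth$ constrains only attacks on \emph{$P$}-assertions, nothing prevents $P$, below $w$, from attacking $O$'s move-$1$ assertion $\neg\neg\xi$ a second time, from re-attacking any of the $O$-asserted negations on your forced spine, or from defending against move $1$ again by asserting $\psi$ a second time. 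All of these are references back into the discarded prefix, and a cut at a single node plus a uniform shift of labels cannot absorb them; Corollary~\ref{no-ws-with-o-defending} excludes $O$-defenses but says nothing about these repeated $P$-moves.

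The paper closes the case by a different surgery that is designed around exactly this difficulty: instead of cutting the tree at one node, it excises \emph{every} copy of the two-step exchange in which $P$ attacks the $O$-asserted $\neg\neg\xi$ and $O$ counter-attacks, wherever in the tree such an exchange occurs, and then verifies that the diminished tree still satisfies $\Dten$ (the deleted $O$-assertion is non-atomic, since $\Nvalid{\neg\neg\xi}$ forces $\xi$ to be non-atomic, so $P$ loses no licence to assert atoms) and $\Dth$ (a doubly-attacked $P$-assertion in the diminished tree would already have been doubly attacked in $s$). To rescue your formulation you would need a prior normalization lemma showing that a winning strategy for $\neg\neg\xi\rightarrow\psi$ can be chosen in which $P$ attacks move $1$ at most once and defends against it at most once; as written, that is precisely the missing content of your proof.
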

\begin{proof}By the Characterization Theorem~\ref{char}, given that $\Nvalid\phi \rightarrow \psi$, it follows that either
\begin{enumerate}
\item $\phi$ is atomic,
\item $\phi$ is negated, or
\item $\psi$ is $\mathrm{N}$-valid.
\end{enumerate}
Case~(3) is the desired conclusion.  Case~(1) is impossible, in light of the assumption that $\Nvalid{\phi}$, by Lemma~\ref{atom}, so the desired conclusion follows vacuously.  It remains to treat case~(2).   By Theorem~\ref{double-negation}, from $\Nvalid{\phi}$, it follows that $\phi = \neg\neg\chi$ for some formula $\chi$.  The beginning of $T_{\phi \rightarrow \psi}$ can be found at the top of Table~\ref{attack1}.
Since these are the first two steps of an $\mathrm{N}$-winning strategy for $P$, the game does not end here with $O$.  If, in any branch of $s$, Proponent chooses to attack move $1$ by asserting $\neg\chi$ as an attack on $O$'s assertion of $\neg\neg\chi$, then the dialogue would proceed as in Table \ref{attack1}.
\begin{table}[t]
  \centering
  \setlength{\tabcolsep}{5pt} 
  \begin{tabular}{|r|c|l|l|}
    0 & $P$ & $\neg\neg\chi\rightarrow \psi$ & \emph{(initial move)}\\
    1 & $O$ & $\neg\neg\chi$ & [A,0]\\
    \vdots & \vdots & \vdots & \vdots \\
    $k$ & $P$ & $\neg\chi$ & [A,1]\\
    $k+1$ & $O$ & $\chi$ & [A,$k$]
  \end{tabular}
\medskip
\caption{\label{attack1}Branch of $T_{\phi \rightarrow \psi}$ where $P$ attacks $O$'s double negation}
\end{table}
Such a branch ends with $O$, so if there were an $\mathrm{N}$-winning strategy for $P$ that begins in this way, then $P$ must have a response.  Proponent cannot attack $O$'s assertion of $\chi$ at any further point of any branch that begins this way, by Corollary~\ref{no-ws-with-o-defending}.  Thus, $P$ must eventually defend against the attack of move $1$ by asserting $\psi$.  We can conclude that $P$ must actually possess a winning strategy for $\psi$ that can be obtained from $s$ by simply removing all copies of the two-step piece where $P$ attacks $\neg\neg\chi$.  Note that $\chi$ cannot be atomic, by Theorem~\ref{double-negation}, since we are assuming $\Nvalid\neg\neg\chi$.  Thus, deleting all these copies of the two-step exchange cannot affect rule $\Dten$.  Rule $\Dth$ is preserved because if $O$ attacks a $P$-statement in the diminished game then the same $P$-assertion would likewise be attacked multiple times in the original game.\qed
\end{proof}
We have thus shown, via semantic means only, a positive solution to the composition problem for $\N$ and thus we can conclude that it is a logic.  In the next section we move towards characterizing what type of logic $\N$ is.

\section{Properties of $\N$}\label{properties}
Having established that $\N$ is a logic, we next say something about what type of logic it is, and how it fits into the scheme of known propositional logics.
\begin{table}[t]
  \centering
  \begin{tabular}{lll}
    $p\vee\neg p$ & \qquad & $\neg p\vee\neg\neg p$\\
    $(p\rightarrow q)\vee(p\rightarrow\neg q)$ & \qquad & $(p\rightarrow q)\vee     (q\rightarrow p)$\\
$\neg\neg p\rightarrow p$ & \qquad & $p\rightarrow\neg\neg p$\\
$p\rightarrow(p\vee q)$ & \qquad & $p\rightarrow(p\wedge p)$\\
      $\neg p\rightarrow(p\rightarrow q)$ & \qquad & $\neg(p\vee\neg p)\rightarrow q$
\end{tabular}
\medskip
\caption{Some $\mathrm{N}$-validities \label{validities}}
\end{table}
We give some examples of $\mathrm{N}$-valid formulas in Table~\ref{validities}. More generally, we know that 
\begin{theorem}$\N\subset\mathsf{CL}$.\end{theorem}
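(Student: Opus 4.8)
I would prove the two halves of the strict inclusion separately: first $\N\subseteq\mathsf{CL}$, then $\mathsf{CL}\not\subseteq\N$. The inclusion is an instance of a general principle worth isolating: \emph{adding to a structural rule set a rule that constrains only $O$'s moves cannot invalidate any formula}. Since $\mathrm{CL}=\mathrm{D10}+\mathrm{D13}+\mathrm{E}=\mathrm{N}+\mathrm{E}$, and rule $\mathrm{E}$ ``If $n\in\dom(\delta)$ is odd then $\eta(n)=[n-1,Z]$'' speaks only about odd (that is, $O$-) positions, leaving $P$'s options untouched, the inclusion $\N\subseteq\mathsf{CL}$ follows. For strictness I would invoke a single classical tautology that is $\mathrm{N}$-invalid.

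\textbf{The inclusion.} Let $s$ be an $\mathrm{N}$-winning strategy for $\phi$, and let $s^{\mathrm{CL}}$ be the subtree of $s$ consisting of all nodes $a$ such that the branch of $s$ from the root to $a$ is a $\mathrm{CL}$-dialogue. Since every $\mathrm{CL}$-dialogue occurs as a branch of $T_{\mathrm{CL},\phi}$, the tree $s^{\mathrm{CL}}$ is a subtree of $T_{\mathrm{CL},\phi}$ with the same root. I would then verify the winning-strategy conditions for $s^{\mathrm{CL}}$. At an $O$-node of $s^{\mathrm{CL}}$, the unique $P$-reply prescribed by $s$ is still available: the $\mathrm{CL}$-legality conditions on a move added at an even position are exactly the $\mathrm{N}$-legality conditions (the rules $\mathrm{D10}$, $\mathrm{D13}$ and the skeletal conditions are shared, and $\mathrm{E}$ is vacuous for even positions), so a $P$-move that is $\mathrm{N}$-legal relative to a history that is already $\mathrm{CL}$-legal is $\mathrm{CL}$-legal. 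At a $P$-node $a$ of $s^{\mathrm{CL}}$, the children are precisely the $\mathrm{CL}$-legal $O$-responses at $a$: because $s$ is a winning strategy it contains \emph{every} $\mathrm{N}$-legal $O$-response at $a$, and the $\mathrm{CL}$-legal ones form a subset, so none that $P$ must answer in $\mathrm{CL}$ is missing. Finally, every branch of $s^{\mathrm{CL}}$ is finite (being a branch or an initial segment of a branch of $s$) and ends at a $P$-node that is $\mathrm{CL}$-terminal --- either it was already $\mathrm{N}$-terminal, or every $\mathrm{N}$-legal $O$-response to it violates $\mathrm{E}$; in particular no branch of $s^{\mathrm{CL}}$ ends at an $O$-node, since $s$ always supplies the $P$-reply. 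Hence $s^{\mathrm{CL}}$ is a $\mathrm{CL}$-winning strategy, so $\vDash_{\mathrm{CL}}\phi$, and $\N\subseteq\mathsf{CL}$.

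\textbf{Strictness.} It suffices to exhibit a classically valid formula that is $\mathrm{N}$-invalid. Since the rule set $\mathrm{CL}$ characterizes classical propositional validity, the implicational form of modus ponens $p\rightarrow((p\rightarrow q)\rightarrow q)$ lies in $\mathsf{CL}$; but it is $\mathrm{N}$-invalid, as already noted in the Remark following Theorem~\ref{char}: $P$ is forced to answer $O$'s attack $p$ by asserting $(p\rightarrow q)\rightarrow q$, and after $O$ attacks this with $p\rightarrow q$ the only non-stalling line for $P$ is to attack $O$'s $p\rightarrow q$ with $p$ ($\mathrm{D10}$ now permits the atom, since $O$ asserted $p$), whereupon $O$ is forced to \emph{defend} with $q$, which Corollary~\ref{no-ws-with-o-defending} excludes from any $\mathrm{N}$-winning strategy. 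Thus $\mathsf{CL}\not\subseteq\N$ and the inclusion is proper.

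\textbf{Main obstacle.} I expect the only delicate point to be the bookkeeping in the pruning step: one must be certain that discarding the $\mathrm{E}$-violating branches never leaves a $P$-node of $s^{\mathrm{CL}}$ short of the $\mathrm{CL}$-legal $O$-children it is obliged to cover, and never demotes a genuinely non-terminal position to a leaf. Both are controlled by the one structural fact that the $\mathrm{CL}$-legal continuations of a position form a subset of its $\mathrm{N}$-legal continuations: nothing $P$ must answer in $\mathrm{CL}$ was ever present only via an illegal continuation, and whatever made a position non-terminal in $\mathrm{N}$ but has now vanished was necessarily an $O$-move violating $\mathrm{E}$, which is exactly what should render that position a $P$-win under $\mathrm{CL}$.
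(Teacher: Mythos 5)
Your proof is correct and follows the same basic outline as the paper's (the inclusion holds because the extra rule $\mathrm{E}$ only restricts Opponent; strictness is witnessed by a classically valid, $\mathrm{N}$-invalid formula), but it differs in two substantive ways. For the inclusion, the paper simply asserts that every $\Dten+\Dth$-strategy \emph{is} a $\Dten+\Dth+\mathrm{E}$-strategy, citing Corollary~\ref{no-ws-with-o-defending}; taken literally this is too quick, since a winning $\mathrm{N}$-strategy must contain \emph{every} $\mathrm{N}$-legal $O$-response at each $P$-node, including attacks on non-immediately-preceding $P$-statements that violate $\mathrm{E}$, so the tree is not itself a subtree of $T_{\mathrm{CL},\phi}$. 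Your pruning construction of $s^{\mathrm{CL}}$, together with the observation that $\mathrm{E}$ is vacuous at even positions and only shrinks the set of $O$-options at odd ones, supplies exactly the bookkeeping the paper elides, and your isolated general principle (a rule constraining only $O$ cannot invalidate a formula) is more reusable than the paper's appeal to the $\mathrm{N}$-specific fact that $O$ never defends in a winning strategy. For strictness, the paper uses Peirce's law $((p\rightarrow q)\rightarrow p)\rightarrow p$ where you use the implicational modus ponens $p\rightarrow((p\rightarrow q)\rightarrow q)$; both are classically valid and $\mathrm{N}$-invalid (the paper records your witness in the remark after Theorem~\ref{char}), and your dialogue-level verification of its invalidity via Corollary~\ref{no-ws-with-o-defending} is sound, so either choice works equally well.
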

\begin{proof}Every $\Dten+\Dth$-strategy is also a $\Dten+\Dth+\mathrm{E}$-strategy, by Theorem~\ref{no-ws-with-o-defending}. That the inclusion is strict follows from the fact that $\nvDash_\mathrm{N}(((p\rightarrow q)\rightarrow p)\rightarrow p)$ (Peirce's law), which is classically valid.\qed\end{proof}
As a corollary, $\N$ is not a connexive logic.  We also know that:
\begin{lemma}$\N\nsubseteq\mathsf{IL}$ and $\mathsf{IL}\nsubseteq\N$.\end{lemma}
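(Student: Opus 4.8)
I would prove the two non-inclusions separately, each by exhibiting a single witness formula and appealing to material already in the excerpt. For $\N\nsubseteq\mathsf{IL}$ I would take a formula listed in Table~\ref{validities} that is classically but not intuitionistically valid; for $\mathsf{IL}\nsubseteq\N$ I would take an intuitionistic validity that has already been flagged as $\mathrm{N}$-invalid.

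\textbf{Step 1: $\N\nsubseteq\mathsf{IL}$.} Put $\phi:=p\vee\neg p$. By Table~\ref{validities}, $\Nvalid{p\vee\neg p}$. If a self-contained argument is wanted it is short and illustrates why $\mathrm{N}$ is nonstandard: the $\mathrm{N}$-dialogue opens with $P$ asserting $p\vee\neg p$, $O$ attacking with $?$, and $P$ defending by the right disjunct $\neg p$; $O$'s only legal reply is to assert $p$ (attacking $\neg p$), and now $P$ may \emph{re-answer} $O$'s attack on the disjunction --- rule $\mathrm{D12}$ is absent from $\mathrm{N}$ --- this time choosing the left disjunct $p$, which is legal under $\Dten$ precisely because $O$ has just asserted $p$. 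At that point $O$ has no move ($\Dth$ blocks re-attacking the two $P$-assertions, and atoms cannot be attacked), so $P$ wins, and since every $O$-choice along the way was forced, this is a winning strategy. But $p\vee\neg p$ is not intuitionistically valid, so $p\vee\neg p\in\N\setminus\mathsf{IL}$.

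\textbf{Step 2: $\mathsf{IL}\nsubseteq\N$.} Put $\psi:=p\rightarrow((p\rightarrow q)\rightarrow q)$, the implicational form of modus ponens, which is intuitionistically valid. The Remark following Theorem~\ref{char} already records $\nvDash_{\mathrm{N}}\psi$, so $\psi\in\mathsf{IL}\setminus\N$ and we are done. To see the $\mathrm{N}$-invalidity directly: since the antecedent $p$ is atomic, the dialogue is forced through $O$ asserting $p$, $P$ defending with $(p\rightarrow q)\rightarrow q$ (by $\Dth$, $O$ cannot re-attack the initial statement), and $O$ attacking with $p\rightarrow q$. At the next $P$-position there are exactly two legal options: (i) attack $p\rightarrow q$ by asserting $p$ (legal under $\Dten$), after which $O$ is forced to \emph{defend} by asserting $q$ --- a branch excluded from every winning strategy by Corollary~\ref{no-ws-with-o-defending}; or (ii) re-assert $(p\rightarrow q)\rightarrow q$, which $O$ re-attacks, and the exchange repeats without end, giving an infinite branch. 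Neither yields a winning strategy, so $\nvDash_{\mathrm{N}}\psi$.

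\textbf{Main obstacle.} The lemma is essentially immediate from results already established; the only genuine work, and the part most in need of care, is the finite case analysis in Step~2 confirming that the $\mathrm{N}$-dialogue tree for $p\rightarrow((p\rightarrow q)\rightarrow q)$ admits no winning strategy --- i.e.\ checking that the play is forced down to a single $P$-position at which every non-immediately-losing response either hands $O$ a defensive move (barred by Corollary~\ref{no-ws-with-o-defending}) or generates an infinite branch. Should one prefer to avoid even this, citing Table~\ref{validities} and the Remark after Theorem~\ref{char} suffices.
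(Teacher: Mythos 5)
Your proof is correct, and the first half coincides exactly with the paper's: both exhibit $p\vee\neg p$ as a witness for $\N\nsubseteq\mathsf{IL}$. For the second non-inclusion you diverge. The paper takes the De Morgan direction $(\neg p\vee\neg q)\rightarrow\neg(p\wedge q)$, whose $\mathrm{N}$-invalidity falls out of results already proved with no dialogue analysis at all: the antecedent is a disjunction, so by the Characterization Theorem~\ref{char} the implication can only be $\mathrm{N}$-valid if its consequent $\neg(p\wedge q)$ is, and Theorem~\ref{double-negation} rules that out since $p\wedge q$ is not a negation. You instead take implicational modus ponens $p\rightarrow((p\rightarrow q)\rightarrow q)$, which the Characterization Theorem cannot dispatch (its antecedent is atomic), so you must either cite the paper's unproved Remark or carry out the hands-on case analysis you sketch. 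That analysis is right --- the play is forced to the position after $O$'s attack with $p\rightarrow q$, where every $P$-option either triggers an $O$-defense (excluded by Corollary~\ref{no-ws-with-o-defending}) or loops, since $\Dten$ blocks the defense with $q$ --- but it is genuine extra work that the paper's choice of witness avoids. What your route buys is a more striking witness (the failure of object-language modus ponens) and a self-contained verification; what the paper's route buys is a two-line proof that leans entirely on the structural theorems already in hand. Both are sound.
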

\begin{proof}For the first claim, $\Nvalid p\vee\neg p$.  For the second claim, $\vDash_\mathsf{IL}(\neg p\vee\neg q)\rightarrow\neg (p\wedge q)$, which, by Theorem \ref{char} is not $\mathrm{N}$-valid, since $\nvDash_\mathrm{N}\neg(p\wedge q)$. \qed \end{proof}
It follows from this that $\N$ is not a relevance logic, since these lie below $\mathsf{IL}$.  Further, since $\N$ is neither sub-intuitionistic nor super-intuitionistic, but is sub-classical, it lies in an interesting and as yet under-investigated part of the lattice of propositional logics.

It turns out that although $\Nvalid\phi\wedge\psi$ iff $\Nvalid\phi$ and $\Nvalid\psi$, and $\Nvalid{\phi \rightarrow \psi}$ implies $\Nvalid{\neg\psi \rightarrow \neg\phi}$, these results and others like them do not hold when formulated as object-language implications.  For example, conjunction elimination ($\phi\wedge\psi\rightarrow\phi$) is not $\mathrm{N}$-valid, and, even more surprisingly, given that $\N$ is closed under modus ponens, neither the conjunctive ($(p\wedge(p\rightarrow q))\rightarrow q$) nor the implicational ($p\rightarrow((p\rightarrow q)\rightarrow q)$) version of modus ponens is $\mathrm{N}$-valid.  Thus, the fact that versions of double negation introduction and elimination are both valid is noteworthy.

$\N$ shares many characteristics with known sub-classical propositional logics, though it does not completely align with any of them.  Like relevance logics, arbitrary uniform substitution is not valid.  Consider, for example, the $\mathrm{N}$-validity $p \rightarrow \neg\neg p$ under the substitution of $p \wedge p$ for $p$: The result of the substitution is $\mathrm{N}$-invalid, because the implication no longer meets any of the requirements in the Characterization Theorem.  Another failure of uniform substitution not immediately given by the Characterization Theorem is the passage from the $\mathrm{N}$-validity $p \vee \neg p$ to $(p \wedge p) \vee \neg(p \wedge p)$.  After $O$'s initial attack, in all branches of the $\mathrm{N}$-dialogue tree $P$ either refrains from asserting $\neg(p \wedge p)$ or asserts it at some move.  Branches where $P$ asserts $\neg(p \wedge p)$ do not lead to a win for $P$ because, after $P$'s assertion of the negation, $O$ must defend by asserting the conjunction.  This leaves $P$ with two options: To attack $O$'s conjunction (and thus fail to win, by Corollary~\ref{no-ws-with-o-defending}), and simply restart the game by defending against the initial attack (so that our analysis of the possible branches recurs, and $P$ does not win).  Branches in which $P$ refrains from asserting $\neg(p \wedge p)$ are infinite because the atomic formula~$p$ is never asserted by $O$, so the only way to play for $P$ is to infinitely repeat the initial defense against the initial attack, which of course does not lead to a win for $P$. The fact that uniform substitution of, e.g., $p\wedge p$ for $p$ in an $\mathrm{N}$-valid formula $\phi$ is not validity preserving points to an curious type of ``resource sensitivity'' in the logic; what is valid with some minimal amount of information may fail to remain valid when more information are provided.  Thus, $\N$ is a type of substructural logic.  Linked to this sensitivity is the fact, illustrated above, that valid inferences cannot be chained together to derive new validities.

\section{Conclusion}\label{conc}

By making a simple and intuitive modification of the usual rules for classical dialogue games, we obtained a set $\N$ of dialogically valid formulas for which we proved a positive answer for its composition problem, thus allowing us to call $\N$ a logic.  Our positive solution to the composition was proved directly through semantic means; we worked solely with dialogue trees and strategies and did not need to follow the usual detour through a cut-free proof system.

The logic $\N$ has a number curious features, including a lack of uniform substitution, and a failure to validate the implicational and conjunctive versions of modus ponens at the object-language level---despite the positive solution to its composition problem---which arise from the fact that if Opponent can defend once, he can always defend.  As a result, this logic privileges implications whose antecedents are atoms or negations, which formulas either cannot be attacked or whose attacks cannot be defended against.  The logic lies below $\mathsf{CL}$, but neither above nor below $\mathsf{IL}$, and is of interest because it is neither connexive nor relevant, two families of well-known non-classical propositional logics which are not superintuitionistic.

\bibliographystyle{splncs03}
\bibliography{icla2011}

\end{document}